\title[Smooth projective 
toric threefolds]{Smooth projective toric varieties whose nontrivial 
nef line bundles are big}
\author{Osamu Fujino and Hiroshi Sato} 
\subjclass[2000]{Primary 14M25; Secondary 14E30.}
\date{2008/10/23}
\address{Department of Mathematics, Faculty of 
Science, Kyoto University\\ 
Kyoto 606-8502, Japan}
\email{fujino@math.kyoto-u.ac.jp}
\address{Faculty of Economics and Information, 
Gifu Shotoku Gakuen University\\ 
Gifu 500-8288, Japan}
\email{hirosato@gifu.shotoku.ac.jp}
\newcommand{\Pic}[0]{{\operatorname{Pic}}}
\newcommand{\Nef}[0]{{\operatorname{Nef}}}
\newcommand{\PE}[0]{{\operatorname{PE}}}
\newcommand{\Amp}[0]{{\operatorname{Amp}}}
\newcommand{\NE}[0]{{\operatorname{NE}}}
\newcommand{\xBig}[0]{{\operatorname{Big}}}
\newtheorem{thm}{Theorem}[section]
\newtheorem{lem}[thm]{Lemma}
\newtheorem{cor}[thm]{Corollary}
\newtheorem{prop}[thm]{Proposition}
\newtheorem*{claim}{Claim}
\newtheorem{ques}[thm]{Question}
\theoremstyle{definition}
\newtheorem{ex}[thm]{Example}
\newtheorem{defn}[thm]{Definition}
\newtheorem{rem}[thm]{Remark}
\newtheorem*{ack}{Acknowledgments}      
\newtheorem*{notation}{Notation}        
\newtheorem{say}[thm]{}
\begin{document}
\bibliographystyle{amsalpha+}

\begin{abstract} 
For any $n\geq 3$, 
we explicitly construct smooth 
projective toric $n$-folds 
of Picard number $\geq 5$, where any 
nontrivial nef line bundles are big. 
\end{abstract}
\maketitle
\tableofcontents

\section{Introduction}

The following question is our main motivation of 
this note. 

\begin{ques}
Are there any smooth projective toric 
varieties $X\not \simeq \mathbb P^n$ such that 
$$
\partial \Nef (X) \cap \partial \PE(X)=\{0\}\ ?   
$$ 
Here, $\Nef(X)$ is the nef cone of $X$ and 
$\PE(X)$ is the pseudo-effective cone of $X$.  
\end{ques}

By definition, the nef cone 
$\Nef (X)$ is included in the pseudo-effective cone 
$\PE(X)$. 
We note that 
$
\partial \Nef (X) \cap \partial \PE(X)=\{0\}$ 
is equivalent to the condition that any nontrivial nef line bundles on $X$ are big when $X\not\simeq \mathbb P^n$. 

In this note, we explicitly construct smooth {\em{projective}} 
toric threefolds of 
Picard number $\geq 5$ on which 
any nontrivial nef line bundles are big. 
The main 
parts of this note are nontrivial examples given in Section \ref{sec4}. 
See Examples \ref{321} and \ref{43}. 
In general, 
it seems to be hard 
to find those examples. 
Therefore, it must be valuable to describe them explicitly here. 
This short note is a continuation and 
a supplement of the papers:~\cite{kleiman} 
and \cite{smooth}. 

Let us see the contents of this note. 
Section \ref{sec2} is a supplement to the toric Mori theory. 
We introduce the notion of \lq general\rq \ complete 
toric varieties. 
By the definition of \lq general\rq \ projective toric varieties, 
it is obvious that the final step of the MMP for a 
$\mathbb Q$-factorial \lq 
general\rq \ projective toric variety 
is a $\mathbb Q$-factorial projective toric variety of Picard 
number one. It is almost obvious if we understand Reid's 
combinatorial description of 
toric extremal contraction morphisms. 
Moreover, it is easy to check that any nontrivial nef line bundles 
on a \lq general\rq \ complete toric variety are always big. 
In Section \ref{sec3}, we recall the basic definitions and 
properties of {\em{primitive collections}} and {\em{primitive 
relations}} after Batyrev. 
By the result of Batyrev, any {\em smooth} projective 
toric variety is \lq general\rq \  if and only if it is isomorphic 
to the projective space. 
So, the results obtained in Section \ref{sec2} can not be used 
to construct examples in Section \ref{sec4}. 
The first author first considered that there 
are plenty of \lq general\rq \ smooth projective toric varieties. 
So, he thought that the examples in Section \ref{sec4} 
is worthless. 
Section \ref{sec4} is the main part of this note. 
We give smooth projective toric threefolds of 
Picard number $\geq 5$, where any nontrivial 
nef line bundles are always big. We note that this phenomenon 
does not occur for smooth projective toric surfaces. 
Let $X$ be a smooth projective toric surface. 
Then we can easily see that there exists a morphism 
$f:X\to \mathbb P^1$ if $X$ is not isomorphic to $\mathbb P^2$. 
So, the line bundle $f^*\mathcal O_{\mathbb P^1}(1)$ 
on $X$ is nef but not big. 
Let $X$ be a smooth projective toric variety and 
let $\Delta$ be the corresponding fan. 
If $\Delta$ is sufficiently complicated combinatorially in some sense, 
then any nontrivial nef line bundles are big. 
However, we do not know how to define \lq complicated\rq \ 
fans suitably. Therefore, the 
explicit examples in Section \ref{sec4} 
seem to be useful. We note that 
it is difficult to calculate nef cones or pseudo-effective 
cones for projective (not necessarily toric) varieties. 
In the final section:~Section \ref{sec5}, 
we collect miscellaneous results. 
We explain how to generalize examples in \cite{smooth} 
and 
in Section \ref{sec4} into dimension $n\geq 4$. 
We also treat $\mathbb Q$-factorial 
projective toric varieties 
with $\Nef(X)=\PE(X)$. 

Let us fix the notation used in this note. 
For the details, see \cite{reid} or \cite{intro}. 
For the basic results on the toric geometry, 
see the standard text books:~\cite{tata}, 
\cite{oda}, or \cite{fulton}. 

\begin{notation}
We will work over some fixed field $k$ throughout this 
note. 
Let $X$ be a complete toric variety; 
a $1$-cycle of $X$ is a formal sum $\sum a_iC_i$ with 
complete curves $C_i$ on $X$, and 
$a_i\in \mathbb Z$. 
We put 
$$
Z_1(X):=\{1\text{-cycles of} \ X\}, 
$$
and 
$$
Z_1(X)_{\mathbb R}:=
Z_1(X)\otimes \mathbb R.  
$$
There is a pairing 
$$
\Pic (X)\times Z_1(X)_{\mathbb R}
\to \mathbb R
$$
defined by 
$(\mathcal L, C)\mapsto \deg _C\mathcal L$, extended 
by bilinearity. 
Define 
$$
N^1(X):=(\Pic (X)\otimes \mathbb R)/\equiv
$$
and 
$$
N_1(X):= Z_1(X)_{\mathbb R}/\equiv, 
$$ 
where the {\em numerical equivalence} $\equiv$ 
is by definition the smallest equivalence relation which 
makes $N^1$ and $N_1$ into dual 
spaces. 

Inside $N_1(X)$ there is a distinguished cone of effective 
$1$-cycles, 
$$
{\NE}(X)=\{\, Z\, | \ Z\equiv 
\sum a_iC_i \ \text{with}\ a_i\in \mathbb R_{\geq 0}\}
\subset N_1(X).  
$$ 
It is known that $\NE(X)$ is a rational polyhedral cone. 
A subcone $F\subset {\NE}(X)$ is said to be {\em{extremal}} 
if $u,v\in {\NE}(X)$, $u+v\in F$ imply $u,v\in F$. 
The cone $F$ is also called an 
{\em{extremal face}} of ${\NE}(X)$. 
A one-dimensional extremal face is called an {\em{extremal 
ray}}. 

We define the {\em{Picard number}} $\rho(X)$ by
$$
\rho (X):=\dim _{\mathbb R}N^1(X)< \infty. 
$$
An element $D\in N^1(X)$ is called {\em{nef}} if 
$D\geq 0$ on ${\NE}(X)$. 

We define the {\em{nef cone}} $\Nef (X)$, the
{\em{ample cone}} $\Amp(X)$, and 
the
{\em{pseudo-effective cone}} $\PE(X)$ in $N^1(X)$ as follows.
$$
\Nef(X)=\{D\, |\,  D \text{\   is nef}\},
$$
$$
\Amp(X)=\{D\, |\,  D\text{\ is ample}\}
$$
and
$$
\PE(X)=
$$
$$\{D\equiv \sum a_i D_i\,|\, 
\text{$D_i$ is an 
effective Weil divisor and $a_i\in \mathbb R_{\geq 0}$}\}.
$$
It is not difficult to see that $\PE(X)$ is a rational polyhedral cone in
$N^1(X)$ since $X$ is toric. 
For the usual definition of $\PE(X)$, see, for 
example, \cite[Definition 2.2.25]{lazarsfeld}. 
It is easy to see that $\Amp(X)\subset \Nef (X)\subset \PE(X)$.

From now on, we assume that $X$ is projective. 
Let $D$ be an $\mathbb R$-Cartier 
divisor on $X$. 
Then $D$ is called {\em{big}} if $D\equiv A+E$ for an ample 
$\mathbb R$-divisor $A$ and 
an effective $\mathbb R$-divisor $E$. 
For the original definition of a big divisor, 
see, for example, \cite[2.2 Big Line Bundles and 
Divisors]{lazarsfeld}. We define 
the {\em{big cone}} $\xBig (X)$ in $N^1(X)$ 
as follows. 
$$
\xBig (X)=\{ D\, |\, D \, \text{is big}\}. 
$$ 
It is well known that 
the big cone is the interior 
of the pseudo-effective 
cone and the pseudo-effective cone is the closure 
of the big cone. 
See, for example, \cite[Theorem 2.2.26]{lazarsfeld}. 

\end{notation}

In \cite{kleiman} and \cite{smooth}, 
we mainly treated {\em{non-projective}} toric 
varieties. 
In this note, we are interested in {\em{projective}} toric 
varieties. 

\section{Supplements to the toric Mori theory}\label{sec2}

We introduce the following new notion. 
It will not be useful when we construct various examples 
of {\em{smooth}} projective toric varieties 
in Section \ref{sec4}. However, we contain it here for the 
future usage. 
By the simple observations in this section, 
we know that the great mass of 
complete toric varieties 
have no nontrivial non-big nef line bundles. 

\begin{defn}\label{11}
Let $X$ be a complete toric variety with 
$\dim X=n$. 
Let $\Delta$ be the fan corresponding to 
$X$. 
Let $G(\Delta)=\{v_1, \cdots, v_m\}$ be the 
set of all primitive 
vectors spanning one dimensional cones in $\Delta$. 
If there exists a relation 
$$
a_{i_1}v_{i_1}+\cdots +a_{i_k}v_{i_k}=0 
$$
such that $\{i_1, \cdots, i_k\}\subset \{1, \cdots, m\}$, 
$a_{i_j}\in \mathbb Z_{>0}$ for 
any $1\leq j\leq k$ with 
$k\leq n$, 
then $X$ is called {\em{\lq special\rq}}. 
If $X$ is not \lq special\rq, 
then $X$ is called {\em{\lq general\rq}}. 
\end{defn}

\begin{ex}
The projective space $\mathbb P^n$ is 
\lq general\rq \  in the sense of 
Definition \ref{11}. 
\end{ex}

Let us prepare the following easy but useful lemmas for 
the toric Mori theory. 
The proofs are obvious. So, we omit them. 

\begin{lem}\label{14}
Let $X$ be a complete toric variety and 
let $\pi: \widetilde X\to X$ be a small projective 
toric $\mathbb Q$-factorialization $($cf.~\cite[Corollary 5.9]{fujino}$)$. 
Assume that $X$ is \lq general\rq\  $($resp.~\lq special\rq$)$. 
Then $\widetilde X$ is also \lq general\rq\  $($resp.~\lq special\rq$)$. 
\end{lem}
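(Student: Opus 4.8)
The plan is to use the fact that a small projective toric $\mathbb{Q}$-factorialization $\pi\colon \widetilde X \to X$ does not change the set of rays of the fan. Recall that $\pi$ is obtained by subdividing $\Delta$ into a simplicial fan $\widetilde\Delta$ without adding any new one-dimensional cones; this is precisely what makes $\pi$ \emph{small} (it is an isomorphism in codimension one). Hence $G(\widetilde\Delta) = G(\Delta)$ as sets of primitive vectors in $N$. First I would state this observation explicitly, citing \cite[Corollary 5.9]{fujino} for the existence and the small-ness of $\pi$.

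Once we know $G(\widetilde\Delta) = G(\Delta) = \{v_1, \dots, v_m\}$, the definition of \lq special\rq\ (Definition \ref{11}) depends only on this finite set of vectors and on $n = \dim X = \dim \widetilde X$, not on the fan structure itself. Indeed, \lq special\rq\ asks for the existence of a positive integral relation $a_{i_1} v_{i_1} + \cdots + a_{i_k} v_{i_k} = 0$ among at most $n$ of the $v_i$. So if $X$ is \lq special\rq, the very same relation witnesses that $\widetilde X$ is \lq special\rq, and conversely. Taking contrapositives gives the corresponding equivalence for \lq general\rq. This is the entire argument, which is why the authors call the proof obvious and omit it.

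The only point that requires a moment's care — and the reason I would not call it completely trivial — is verifying that the $\mathbb{Q}$-factorialization genuinely introduces no new rays. This is where \cite[Corollary 5.9]{fujino} does the real work: a $\mathbb{Q}$-factorialization is by construction a small (crepant, in the appropriate sense) modification, so the support $|\widetilde\Delta| = |\Delta|$ and the one-skeleton $\widetilde\Delta(1) = \Delta(1)$ coincide; only higher-dimensional cones get subdivided. I would phrase the proof as: "By \cite[Corollary 5.9]{fujino}, $\pi$ is small, so $\widetilde\Delta$ is a simplicial subdivision of $\Delta$ with $G(\widetilde\Delta) = G(\Delta)$. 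Since the property of being \lq special\rq\ depends only on $G(\Delta)$ and $\dim X$ (both unchanged by $\pi$), the claim follows immediately; the \lq general\rq\ case is the contrapositive." I do not anticipate any genuine obstacle beyond making sure the reader accepts that smallness implies equality of one-skeletons.
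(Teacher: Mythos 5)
Your proof is correct and is exactly the argument the authors had in mind when they declared the lemma obvious and omitted the proof: smallness of $\pi$ means the subdivision adds no new rays, so $G(\widetilde\Delta)=G(\Delta)$, and the \lq special\rq/\lq general\rq\ dichotomy depends only on this set of primitive generators and the (unchanged) dimension. Nothing further is needed.
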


More generally, we have the following lemma. 

\begin{lem}\label{15}
Let $X$ and $X'$ be complete toric varieties and 
let $\varphi :X\dashrightarrow X'$ be a proper birational 
toric map. 
Assume that $\varphi$ is an isomorphism in codimension one. 
Then $X$ is \lq general\rq\  if and only if 
so is $X'$. 
\end{lem}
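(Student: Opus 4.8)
The plan is to reduce everything to the observation that the property of being \lq general\rq\ depends only on the set $G(\Delta)$ of primitive ray generators together with the integer $n=\dim X$, and that a proper birational toric map which is an isomorphism in codimension one changes neither of these.

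First I would set up the combinatorial dictionary. Let $N$ be the common lattice, so that $X$ and $X'$ correspond to complete fans $\Delta$ and $\Delta'$ in $N_{\mathbb R}$; since $\varphi$ is birational and toric we may take the associated lattice homomorphism to be the identity on $N$, so $\varphi$ is the identity on the open torus $T_N\subset X,X'$. The torus-invariant prime divisors of $X$ (resp.~$X'$) are in bijection with the one-dimensional cones of $\Delta$ (resp.~$\Delta'$), the correspondence sending a divisor to the point of $N$ giving its associated valuation. Here I would invoke the standard fact (see, e.g., \cite{oda} or \cite{fulton}) that a birational toric map is an isomorphism in codimension one precisely when its restriction to the big open subsets induces an equivariant bijection between the torus-invariant prime divisors of $X$ and those of $X'$: no invariant divisor of either side is contracted or newly created. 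Reading this off on the fans, it says exactly that $\Delta$ and $\Delta'$ have the same $1$-skeleton, i.e., $G(\Delta)=G(\Delta')$ as subsets of $N$. Of course $\dim X=\dim X'=n$ since $\varphi$ is birational.

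It then remains only to unwind Definition \ref{11}: $X$ is \lq special\rq\ if and only if some subfamily of size $k\le n$ of $G(\Delta)$ admits a relation $a_{i_1}v_{i_1}+\cdots+a_{i_k}v_{i_k}=0$ with all $a_{i_j}\in\mathbb Z_{>0}$. This condition refers only to the finite subset $G(\Delta)\subset N$ and to $n$. Since $G(\Delta)=G(\Delta')$ inside $N$ and the two dimensions coincide, the condition holds for $X$ exactly when it holds for $X'$; equivalently, $X$ is \lq general\rq\ if and only if $X'$ is \lq general\rq. The argument is essentially formal, and the only point needing a little care — the step where the hypothesis is actually used — is the equivalence between \lq isomorphism in codimension one\rq\ and \lq same $1$-skeleton of the fan\rq, which should either be cited or spelled out via the divisor–ray correspondence; this is also what makes Lemma \ref{14} a special case, since a small projective $\mathbb Q$-factorialization is birational and an isomorphism in codimension one.
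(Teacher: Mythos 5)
Your proposal is correct and is exactly the argument the paper has in mind (the paper omits the proof as \lq obvious\rq): an isomorphism in codimension one identifies the torus-invariant prime divisors, hence the $1$-skeletons $G(\Delta)=G(\Delta')$ in the common lattice, and the \lq general\rq/\lq special\rq\ dichotomy of Definition \ref{11} depends only on $G(\Delta)$ and $n=\dim X$. Nothing further is needed.
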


\begin{lem}\label{16}
Let $X$ and $Z$ be a complete toric varieties and 
let $\pi:X\to Z$ be a birational toric morphism. 
Assume that $X$ is \lq general\rq. 
Then $Z$ is \lq general\rq. 
We note that $Z$ is not necessarily 
\lq special\rq\  even if $X$ is \lq special\rq. 
\end{lem}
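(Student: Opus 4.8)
The plan is to use Reid's combinatorial description of birational toric morphisms together with the fact, implicit in Definition \ref{11}, that being \lq special\rq\ or \lq general\rq\ only depends on the set $G(\Delta)$ of primitive ray generators and the $\mathbb Z$-linear relations among them. First I would set up notation: let $\Delta$ and $\Delta_Z$ be the fans of $X$ and $Z$, and recall that a birational toric morphism $\pi:X\to Z$ corresponds to $\Delta$ being a subdivision of $\Delta_Z$. The key combinatorial observation is that every one-dimensional cone of $\Delta_Z$ is also a one-dimensional cone of $\Delta$; in other words $G(\Delta_Z)\subset G(\Delta)$ as subsets of $N$.

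Next I would argue the contrapositive: suppose $Z$ is \lq special\rq. Then by Definition \ref{11} there is a positive integral relation
$$
a_{i_1}v_{i_1}+\cdots+a_{i_k}v_{i_k}=0
$$
with $v_{i_j}\in G(\Delta_Z)$, all $a_{i_j}\in\mathbb Z_{>0}$, and $k\leq n=\dim X=\dim Z$. Since $G(\Delta_Z)\subset G(\Delta)$, the very same relation exhibits $X$ as \lq special\rq. Hence if $X$ is \lq general\rq\ then $Z$ is \lq general\rq, which is the assertion. The reverse containment of ray sets fails in general — $\Delta$ can have many more rays than $\Delta_Z$ — which is exactly why the converse direction need not hold: a \lq special\rq\ $X$ may acquire its vanishing relation from rays introduced by the subdivision, rays that disappear on $Z$. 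I would include the standard blow-up of $\mathbb P^3$ at a torus-fixed point (or a similar explicit fan) as the remark-justifying example showing $Z$ need not be \lq special\rq\ when $X$ is.

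There is essentially no hard step here; the only point requiring a word of care is the claim $G(\Delta_Z)\subseteq G(\Delta)$, i.e.\ that a ray of the coarser fan is genuinely a ray of the refinement and its primitive generator is unchanged. This is immediate from the definition of a subdivision of fans: a subdivision refines cones without altering the underlying support or introducing new lattice structure, so each $1$-dimensional cone $\mathbb R_{\geq 0}v$ of $\Delta_Z$, having no proper subdivision, occurs verbatim in $\Delta$ with the same primitive vector $v$. Given that, the proof is the one-line transport of the defining relation noted above, and the parenthetical remark about $Z$ not being \lq special\rq\ is handled by a single explicit fan.
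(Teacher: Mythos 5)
Your argument is correct: after identifying the lattices via the birational morphism, $\Delta$ refines $\Delta_Z$, so every ray of $\Delta_Z$ (with the same primitive generator) occurs in $\Delta$, and any \lq special\rq\ relation for $Z$ transports verbatim to $X$, giving the contrapositive; the blow-up of $\mathbb P^3$ at a torus-fixed point does witness the final remark, since $v_4+v_5=0$ there while $\mathbb P^3$ is \lq general\rq. The paper omits the proof as obvious, and what you wrote is exactly the intended argument.
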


We have two elementary properties. 

\begin{prop}\label{18}
Let $X$ be a complete 
toric variety and let $f:X\to Y$ be a proper surjective 
toric morphism onto $Y$. 
Assume that $X$ is \lq general\rq\  and 
that $\dim Y<\dim X$. 
Then $Y$ is a point. 
\end{prop}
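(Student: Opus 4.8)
The plan is to argue by contradiction: assuming $Y$ is not a point, I will exhibit a relation showing that $X$ is \lq special\rq, contrary to hypothesis. Put $n=\dim X$ and $n'=\dim Y$, so that $1\le n'<n$, and let $\Delta$ and $\Delta'$ be the fans of $X$ and $Y$. Since $f$ is a surjective toric morphism it is induced by a lattice homomorphism $\phi\colon N\to N'$ whose real extension $\bar\phi\colon N_{\mathbb R}\to N'_{\mathbb R}$ is surjective and maps each cone of $\Delta$ into a cone of $\Delta'$. Set $V:=\ker\bar\phi$, a rational linear subspace with $\dim_{\mathbb R}V=n-n'$, so $1\le n-n'\le n-1$.

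First I would verify that $\Delta_V:=\{\,\sigma\in\Delta\mid\sigma\subseteq V\,\}$ is a complete fan in $V$ (with the lattice $V\cap N$). It is evidently a subfan of $\Delta$ with $|\Delta_V|\subseteq V$; for the reverse inclusion, let $x\in V$. As $X$ is complete, $|\Delta|=N_{\mathbb R}$, so $x\in\operatorname{relint}\tau$ for some $\tau\in\Delta$. By a standard property of toric morphisms, $\bar\phi$ maps $\operatorname{relint}\tau$ into the relative interior of some $\rho'\in\Delta'$; since $\bar\phi(x)=0$ this gives $0\in\operatorname{relint}\rho'$, forcing $\rho'=\{0\}$ because the cones of a fan are strongly convex. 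Hence $\bar\phi(\operatorname{relint}\tau)=\{0\}$, so $\bar\phi(\tau)=\{0\}$ by continuity, i.e.\ $\tau\subseteq V$; thus $\tau\in\Delta_V$ and $x\in|\Delta_V|$. So $|\Delta_V|=V$.

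The key step is the elementary fact that a complete fan $\Sigma$ in a rational vector space $V$ of dimension $r\ge1$ admits a relation $a_1w_{i_1}+\cdots+a_kw_{i_k}=0$ with $a_j\in\mathbb Z_{>0}$ and $k\le r+1$ among the primitive generators of its rays. To prove it, pick any ray of $\Sigma$ with primitive generator $w$; since $-w$ lies in the support, it lies in some $\tau\in\Sigma$, and by Carath\'eodory's theorem for cones one can write $-w=\sum_{i\in I}\mu_iu_i$ with $\mu_i>0$, the $u_i$ linearly independent primitive ray generators of $\tau$, and $|I|\le r$; then $w\notin\{u_i\}_{i\in I}$ (otherwise the $u_i$ would be linearly dependent), so $w+\sum_{i\in I}\mu_iu_i=0$ involves at most $r+1$ distinct rays, and clearing denominators makes the coefficients positive integers. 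Applying this with $\Sigma=\Delta_V$ and $r=n-n'$, and using that the rays of $\Delta_V$ are rays of $\Delta$ and that $r+1=n-n'+1\le n$ since $n'\ge1$, I obtain a relation as in Definition \ref{11}. Hence $X$ is \lq special\rq, a contradiction, so $Y$ is a point.

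The part demanding the most care is the identification of $\Delta_V$ as a complete fan in $V$; its only non-formal ingredients are the standard facts that the relative interior of a cone of $\Delta$ maps into the relative interior of a cone of $\Delta'$ and that $\{0\}$ is the unique cone with $0$ in its relative interior. It is worth noting that only completeness of $X$ (via $|\Delta|=N_{\mathbb R}$) is used, and that the hypothesis $\dim Y<\dim X$ is exactly what makes $V$ nonzero; if $\dim Y=\dim X$ then $V=\{0\}$ and the argument correctly breaks down.
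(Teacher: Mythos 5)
Your proof is correct. The paper offers no argument at all for Proposition \ref{18} (its proof reads ``It is obvious''), and what you have written is precisely the natural argument being left implicit: the kernel $V=\ker\bar\phi$ carries the complete subfan $\Delta_V$ of dimension $n-n'\le n-1$, and Carath\'eodory for cones then produces a positive relation among at most $n-n'+1\le n$ primitive generators of $\Delta$, contradicting \lq general\rq. The only detail worth adding is that the Carath\'eodory coefficients $\mu_i$ are automatically rational (they are the unique solution of a rational linear system, since the $u_i$ are linearly independent lattice vectors), so ``clearing denominators'' is legitimate; alternatively, the step $\bar\phi(x)=0\Rightarrow\bar\phi(\tau)=\{0\}$ can be seen even more directly from strong convexity of the cone of $\Delta'$ containing $\bar\phi(\tau)$, without invoking the relint-to-relint property.
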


\begin{proof} 
It is obvious. 
\end{proof}

\begin{cor}\label{19}
Let $X$ be a complete toric variety. 
Assume that $X$ is \lq general\rq. 
Let $D$ be a nef Cartier divisor on $X$ such that 
$D\not \sim 0$. 
Then $D$ is big. 
\end{cor}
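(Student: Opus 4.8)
The plan is to show that a nonzero nef Cartier divisor $D$ on a \lq general\rq\ complete toric variety $X$ must be big, by contradiction. Suppose $D$ is nef but not big. Then $D$ lies on the boundary of $\PE(X)$, so $D^{\dim X}=0$ and more importantly the numerical dimension of $D$ is strictly less than $n=\dim X$. First I would invoke the semiampleness of nef divisors on toric varieties: on a complete toric variety every nef Cartier divisor is semiample (this is standard — the linear system $|mD|$ is base-point free for $m\gg 0$, since the nef cone coincides with the cone generated by the semiample divisors in the toric setting). Hence for suitable $m>0$ the divisor $mD$ defines a proper surjective toric morphism $f\colon X\to Y$ onto a projective toric variety $Y$ with $mD\sim f^*A$ for some ample Cartier divisor $A$ on $Y$.

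Next I would analyze $\dim Y$. Since $D\not\sim 0$, the morphism $f$ is nonconstant, so $\dim Y\geq 1$. On the other hand, if $\dim Y=\dim X$ then $f$ is generically finite, hence $D$ (being the pullback of an ample divisor by a generically finite morphism) is big, contradicting our assumption. Therefore $1\leq \dim Y<\dim X$. Now apply Proposition \ref{18} to the proper surjective toric morphism $f\colon X\to Y$: since $X$ is \lq general\rq\ and $\dim Y<\dim X$, we conclude $Y$ is a point. This contradicts $\dim Y\geq 1$, and the proof is complete. So in fact the only way out is $D\sim 0$, which we excluded; hence $D$ is big.

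The main obstacle I anticipate is the toric semiampleness input — making precise that a nef Cartier divisor on a complete (not necessarily projective or $\mathbb Q$-factorial) toric variety is semiample, and that the associated morphism is toric with the stated pullback property. This is well known (it follows from the description of global sections of toric line bundles via lattice points of the associated polytope, together with the fact that nef divisors correspond to convex support functions), but one should cite it carefully, e.g. from \cite{fulton} or \cite{oda}; after that, the argument is purely formal and reduces to Proposition \ref{18}. A minor point to check is that $f$ can indeed be taken toric and that replacing $D$ by a positive multiple $mD$ does not affect bigness, which is immediate from the definition.
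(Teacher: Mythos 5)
Your proposal is correct and follows essentially the same route as the paper: use the semiampleness (in fact base-point freeness) of nef Cartier divisors on complete toric varieties to get a proper surjective toric morphism, then apply Proposition \ref{18} to force the image to have full dimension, whence $D$ is big as the pullback of an ample divisor under a generically finite map. The paper simply uses $\Phi_{|D|}$ directly instead of $|mD|$, a cosmetic difference.
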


\begin{proof}
Since $D$ is nef, the linear system $|D|$ defines 
a proper surjective toric morphism $\Phi_{|D|}: X\to Z$. 
Apply Proposition \ref{18} to $\Phi_{|D|}:X\to Z$. 
Then we obtain $\dim Z=\dim X$. 
Therefore, $D$ is big.  
\end{proof}

The next proposition is also obvious. We contain it 
for the reader's convenience because 
it has not been stated explicitly in the 
literature. 
For the details of the toric Mori theory, 
see \cite[Section 5]{fujino} and 
\cite{intro}. 

\begin{prop}[MMP for \lq general\rq \ projective toric varieties]\label{17}
Let $X$ be a $\mathbb Q$-factorial 
projective toric variety and let $B$ be a 
Cartier divisor on $X$ such 
that $B$ is not pseudo-effecitve. 
Assume that $X$ is \lq general\rq. 
We run the MMP with respect to 
$B$. 
Then we obtain a sequence of 
$B$-negative divisorial contractions and 
$B$-flips$:$ 
$$
X=X_0\dashrightarrow X_1\dashrightarrow 
\cdots \dashrightarrow X_i\dashrightarrow 
X_{i+1}\dashrightarrow \cdots 
\dashrightarrow X_l, 
$$ 
where $X_l$ is a $\mathbb Q$-factorial 
projective toric variety with $\rho (X_l)=1$. 
\end{prop}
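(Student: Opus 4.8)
The plan is to run the usual toric Minimal Model Program with respect to $B$ and track two invariants along the way: the Picard number, which drops by exactly one at each divisorial contraction and is unchanged under a flip, and the ``general''-ness of the varieties $X_i$. First I would invoke the general machinery of the toric MMP (see \cite[Section 5]{fujino}): since $X$ is $\mathbb Q$-factorial projective and $B$ is not pseudo-effective, there is a $B$-negative extremal ray $R\subset \NE(X)$; Reid's combinatorial description of toric extremal contractions produces the associated contraction $\varphi_R\colon X\to W$. If $\varphi_R$ is of fiber type, then $\dim W<\dim X$, and since $X$ is ``general'' Proposition \ref{18} forces $W$ to be a point, hence $\rho(X)=1$ and we set $l=0$. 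Otherwise $\varphi_R$ is birational: either it is divisorial, in which case we set $X_1=W$ and note $\rho(X_1)=\rho(X)-1$, or it is small (a flipping contraction), in which case the toric flip $X\dashrightarrow X_1$ exists (toric flips always exist, cf.~\cite[Section 5]{fujino} and \cite{intro}) and $\rho(X_1)=\rho(X)$.

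The key point making the induction go through is that ``general''-ness is preserved at every stage. For a divisorial contraction $\pi\colon X_i\to X_{i+1}$, this is exactly Lemma \ref{16}: the image of a ``general'' complete toric variety under a birational toric morphism is again ``general''. For a flip $X_i\dashrightarrow X_{i+1}$, this is an isomorphism in codimension one, so Lemma \ref{15} applies and $X_{i+1}$ is ``general''. In either case $X_{i+1}$ is again $\mathbb Q$-factorial and projective, and the strict transform of $B$ remains not pseudo-effective (it stays $B$-negative on the next extremal ray to be contracted), so we may repeat the argument with $X_{i+1}$ in place of $X_i$. The process terminates because $\rho$ is a nonnegative integer that strictly decreases at each divisorial step, and termination of toric flips is known (the MMP for toric varieties always terminates, cf.~\cite[Section 5]{fujino}); thus after finitely many steps we reach $X_l$ on which the MMP stops, i.e.\ $K$-type considerations are irrelevant here and the only way to stop is that $B$ becomes nef or a fiber-type contraction occurs. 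Since $B$ is not pseudo-effective on $X_l$ (its strict transform is still not pseudo-effective, as pseudo-effectivity would be preserved under these operations in the wrong direction — more precisely, a nef class would have to be pseudo-effective, contradiction), the final contraction on $X_l$ must be of fiber type, and then Proposition \ref{18} applied to $X_l$ (which is ``general'' by the inductive preservation) forces the target to be a point, so $\rho(X_l)=1$.

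The main obstacle is not any single step but rather assembling the standard facts of the toric MMP — existence of $B$-negative extremal contractions, existence of toric flips, and termination — into a clean statement and verifying that each elementary operation preserves all the hypotheses (projectivity, $\mathbb Q$-factoriality, ``general''-ness, and non-pseudo-effectivity of the strict transform of $B$) so that the induction closes. Once those bookkeeping points are in place, the geometric heart of the argument is simply that a ``general'' variety admits no nontrivial fibration by Proposition \ref{18}, which pins down the endpoint to have Picard number one.
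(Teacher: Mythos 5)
Your proposal is correct and follows essentially the same route as the paper: run the toric MMP for the non-pseudo-effective divisor $B$, use Lemmas \ref{15} and \ref{16} to preserve \lq general\rq-ness along divisorial contractions and flips, note that the last step must be a fiber-type (Fano) contraction, and apply Proposition \ref{18} to force its target to be a point, giving $\rho(X_l)=1$. The paper's proof is just a terser version of the same argument, leaving the bookkeeping you spelled out implicit.
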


\begin{proof}
Run the MMP with respect to $B$, where 
$B$ is not pseudo-effective, for example, 
$B=K_X$. 
Since $B$ is not pseudo-effective, the 
final step is a Fano contraction $X_l\to Z$. 
Since $X$ is \lq general\rq, 
$X_l$ is also \lq general\rq \ by Lemmas \ref{15} and \ref{16}. 
Therefore, 
$Z$ must be a point by Corollary \ref{18}. 
This means that 
$X_l$ is a $\mathbb Q$-factorial 
projective toric variety with $\rho(X_l)=1$. 
\end{proof}

We will see that any smooth projective 
toric variety $X$, which is not isomorphic to 
the projective space, is \lq special\rq\  by \cite{batyrev}. 
See Proposition \ref{25} below. 
So, the results in this section can not be applied to 
{\em{smooth}} projective toric varieties. 

\section{Primitive collections and relations}\label{sec3}

Let us recall the notion of primitive collections and 
primitive relations introduced by Batyrev (cf.~\cite{batyrev}). 
It is very useful to compute some explicit examples of 
toric varieties. Note that 
this section is not indispensable 
for understanding the examples in Section \ref{sec4}. 

Let $\Delta$ be a complete non-singular 
$n$-dimensional fan and 
let $G(\Delta)$ be the set of all primitive generators of 
$\Delta$. 
\begin{defn}[Primitive collection]
A non-empty subset $\mathcal P=\{v_1,$ $\cdots,v_k\}\subset 
G(\Delta)$ is called a {\em{primitive collection}} if 
for each generator $v_i\in \mathcal P$ the elements of $\mathcal 
P\setminus \{v_i\}$ generate a $(k-1)$-dimensional 
cone in $\Delta$, while $\mathcal P$ does not 
generate any $k$-dimensional 
cone in $\Delta$. 
\end{defn}
\begin{defn}[Focus]
Let $\mathcal P=\{v_1, \cdots, v_k\}$ be a 
primitive collection in $G(\Delta)$. 
Let $S(\mathcal P)$ denote $v_1+\cdots +v_k$. 
The {\em{focus}} $\sigma (\mathcal P)$ of $\mathcal P$ 
is the cone in $\Delta$ of the smallest 
dimension containing $S(\mathcal P)$.  
\end{defn}
\begin{defn}[Primitive relation]
Let $\mathcal P=\{v_1, \cdots, v_k\}$ be a primitive 
collection in $G(\Delta)$ and $\sigma(\mathcal P)$ 
its focus. 
Let $w_1, \cdots, w_m$ be the primitive generators 
of $\sigma (\mathcal P)$. Then 
there exists a unique linear combination $a_1w_1+\cdots 
+a_mw_m$ with positive integer coefficients 
$a_i$ which is equal to $v_1+\cdots+ v_k$. Then the linear 
relation $v_1+\cdots +v_k-a_1w_1-\cdots -a_mw_m=0$ 
is called the {\em{primitive 
relation associated with $\mathcal P$}}. 
\end{defn}

Then we have the description of $\NE(X)$ by primitive 
relations. 

\begin{thm}[{cf.~\cite[2.15 Theorem]{batyrev}}]
Let $\Delta$ be a projective 
non-singular fan and $X=X(\Delta)$ 
the corresponding toric variety. 
Then the Kleiman-Mori cone $\NE(X)$ is generated 
by all primitive relations. 
The primitive relation which spans an extremal 
ray of $\NE(X)$ is said to be {\em{extremal}}. 
\end{thm}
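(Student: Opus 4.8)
The result is essentially due to Batyrev (cf.~\cite[2.15 Theorem]{batyrev}), so one option is simply to quote that reference; here is how I would reconstruct it. First I would make the standard identification of $N_1(X)_{\mathbb R}$ with the space of linear relations among $G(\Delta)$. The non-singular complete fan $\Delta$ gives an exact sequence
$$
0\longrightarrow N_1(X)_{\mathbb R}\longrightarrow {\mathbb R}^{G(\Delta)}
\longrightarrow N_{\mathbb R}\longrightarrow 0,
$$
where the second arrow sends the basis vector indexed by $v_i$ to $v_i$; thus a $1$-cycle class is a tuple $(a_1,\dots,a_m)$ with $\sum_i a_iv_i=0$, and under this identification one has $D_{v_i}\cdot (a_1,\dots,a_m)=a_i$. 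In particular, each primitive relation $v_1+\dots+v_k-a_1w_1-\dots-a_\ell w_\ell=0$ (after reindexing) defines a class $[r(\mathcal P)]\in N_1(X)_{\mathbb R}$, and the assertion to be proved is that $\NE(X)$ is the cone generated by these classes. I would also use the classical description of the Kleiman--Mori cone of a simplicial projective toric variety (see, e.g., \cite{reid} and \cite[Section 5]{fujino}): $\NE(X)$ is generated by the classes $[V(\tau)]$ of the torus-invariant curves attached to the walls $\tau$ of $\Delta$, and, under the identification above, $[V(\tau)]$ is exactly the class of the \emph{wall relation} of $\tau$, the relation having coefficient $+1$ on each of the two generators not lying in $\tau$.

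Granting this, it suffices to prove two things: (a) every primitive relation class lies in $\NE(X)$; (b) every wall class is a non-negative combination of primitive relation classes. For (a) I would dualize: it is enough to show $D\cdot[r(\mathcal P)]\geq 0$ for every nef Cartier divisor $D$. Write $D=\sum_v a^D_vD_v$ and let $\psi_D$ be the associated support function, so that $\psi_D(v)=-a^D_v$ on $G(\Delta)$. Pick a maximal cone $\gamma$ of $\Delta$ containing the focus $\sigma(\mathcal P)$, and let $m_\gamma\in M_{\mathbb R}$ be the linear form with $\psi_D|_\gamma=\langle m_\gamma,-\rangle$. Since the generators $w_j$ of $\sigma(\mathcal P)$ lie in $\gamma$ we get $\langle m_\gamma,w_j\rangle=\psi_D(w_j)$, while nefness gives $\langle m_\gamma,v\rangle\geq \psi_D(v)$ for every $v\in\mathcal P$. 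Substituting these into $D\cdot[r(\mathcal P)]=\sum_{v\in\mathcal P}a^D_v-\sum_j a_ja^D_{w_j}$ and using $S(\mathcal P)=v_1+\dots+v_k=a_1w_1+\dots+a_\ell w_\ell$, the resulting expression telescopes to $0$; hence $D\cdot[r(\mathcal P)]\geq 0$.

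The substance of the proof is (b). Fix a wall $\tau=\sigma\cap\sigma'$, let $v_0$ (resp.~$v_1$) be the generator of $\sigma$ (resp.~$\sigma'$) not lying in $\tau$, and write the wall relation as $v_0+v_1=\sum_{v_i\in\tau}c_iv_i$. The positive part of this relation is $\{v_0,v_1\}$ together with the $v_i$ having $c_i<0$. In the simplest situation all $c_i\geq 0$: then $v_0+v_1$ lies in $\tau$, one checks (using that $\Delta$ is a fan and $v_0+v_1\in\operatorname{span}\tau$) that $\operatorname{cone}(v_0,v_1)\notin\Delta$, so $\{v_0,v_1\}$ is a primitive collection whose focus is the face of $\tau$ spanned by the $v_i$ with $c_i>0$, and uniqueness of the linear combination forces the primitive relation of $\{v_0,v_1\}$ to coincide with the wall relation; thus $[V(\tau)]$ is itself a primitive relation class. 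In general, when some $c_i<0$, one locates a primitive sub-collection $\mathcal P$ of the positive part, subtracts $[r(\mathcal P)]$ from $[V(\tau)]$ with an appropriate positive coefficient, and argues that the remainder is again a non-negative combination of wall classes of strictly smaller complexity (e.g.~fewer generators occurring with nonzero coefficient), so that one may induct. Making this reduction precise --- in particular controlling the remainder at each step --- is the one real obstacle; it is precisely the combinatorial heart of \cite[2.15 Theorem]{batyrev}, which for the purposes of this note may simply be cited.
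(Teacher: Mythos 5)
The paper does not prove this statement at all: it is quoted directly from Batyrev, with the citation to [B, 2.15 Theorem] standing in for a proof. So your closing suggestion --- that the combinatorial heart may simply be cited --- is exactly what the authors do, and in that sense your proposal is consistent with the paper's treatment. The extra material you supply is correct and standard: the identification of $N_1(X)_{\mathbb R}$ with the space of linear relations among the elements of $G(\Delta)$, the fact that the wall class $[V(\tau)]$ corresponds to the wall relation, and the telescoping computation showing $D\cdot [r(\mathcal P)]\geq 0$ for nef $D$ (choose a maximal cone $\gamma$ containing the focus, use linearity of $\psi_D$ on $\gamma$ for the $w_j$ and convexity for the $v_i\in\mathcal P$, then use $\sum_i v_i=\sum_j a_jw_j$). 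This gives the easy inclusion that every primitive relation class lies in $\NE(X)$, which Batyrev's statement subsumes but which you prove honestly. Be aware, though, that the step you flag as ``the one real obstacle'' is genuinely delicate: subtracting a primitive relation class from a wall class need not visibly decrease any naive complexity measure, and making the induction well-founded is the substantive content of Batyrev's argument (a fully detailed proof along these lines was later written out by Cox and von Renesse). If you intend your write-up as a self-contained proof rather than a reconstruction-plus-citation, that reduction would still have to be supplied; as a gloss on the theorem as the paper uses it, what you have is adequate.
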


Let $\Delta$ be a {\em{projective}} non-singular 
$n$-dimensional 
fan. 
Then, Batyrev obtained the following important 
result. 

\begin{prop}[{cf.~\cite[3.2 Proposition]{batyrev}}]\label{25}
There exists a primitive collection $\mathcal P=\{v_1, \cdots, 
v_k\}$ in $G(\Delta)$ such that 
the associated primitive 
relation is of the form 
$$
v_1+\cdots +v_k=0. 
$$
In other words, the focus $\sigma(\mathcal P)=\{0\}$. 
\end{prop}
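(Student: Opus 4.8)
The plan is to read the desired primitive collection off a strictly convex support function, which is available because $\Delta$ is projective. First I would fix an ample Cartier divisor $D$ on $X=X(\Delta)$ and, replacing $D$ by a linearly equivalent divisor — equivalently, translating the polytope of $D$ so that the origin lies in its interior — arrange $D=\sum_{v\in G(\Delta)}b_vD_v$ with $b_v>0$ for every $v$. Its support function $\varphi\colon N_{\mathbb R}\to\mathbb R$ is linear on each cone of $\Delta$, with $\varphi(v)=-b_v$ on $G(\Delta)$, and this normalization forces $\varphi<0$ on $N_{\mathbb R}\setminus\{0\}$: a nonzero point lies in the relative interior of some cone $\tau\in\Delta$, hence is a strictly positive combination of the generators of $\tau$, on which $\varphi$ is negative and linear. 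Consequently, for every primitive collection $\mathcal P=\{v_1,\dots,v_k\}$ one has $\varphi(S(\mathcal P))\le 0$, with equality if and only if $S(\mathcal P)=0$, i.e. if and only if $\sigma(\mathcal P)=\{0\}$; so the assertion is equivalent to the existence of a primitive collection with $\varphi(S(\mathcal P))=0$. I would also record the basic inequality for $\varphi$: for any $u_1,\dots,u_r\in N_{\mathbb R}$ one has $\varphi(u_1)+\cdots+\varphi(u_r)\le\varphi(u_1+\cdots+u_r)$, with equality exactly when the $u_i$ lie in a common cone of $\Delta$. Applied to $\mathcal P$ with primitive relation $v_1+\cdots+v_k=a_1w_1+\cdots+a_mw_m$ — the $v_i$ not lying in a common cone, while $w_1,\dots,w_m$ span the cone $\sigma(\mathcal P)$ where $\varphi$ is linear — this gives
$$\varphi(v_1)+\cdots+\varphi(v_k)<\varphi(S(\mathcal P))=a_1\varphi(w_1)+\cdots+a_m\varphi(w_m),$$
which is just the ampleness of $D$ spelled out on the one-cycle class of that primitive relation.

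Next, a subset of $G(\Delta)$ not spanning a cone of $\Delta$ always exists: since $\Delta$ is complete, $G(\Delta)$ positively spans $N_{\mathbb R}$, so it carries a linear relation with strictly positive coefficients, and no strongly convex cone can contain such a relation among its generators — thus $G(\Delta)$ itself is such a \lq non-face\rq. Because $\Delta$ is simplicial, an inclusion-minimal non-face is precisely a primitive collection (each proper subset spans a cone; the collection does not). Hence it suffices to exhibit a primitive collection with trivial focus.

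For that I would run Batyrev's $\varphi$-driven descent \cite[3.2 Proposition]{batyrev}. Start from a primitive collection $\mathcal P=\{v_1,\dots,v_k\}$ with $\sigma(\mathcal P)\ne\{0\}$ and primitive relation $v_1+\cdots+v_k=a_1w_1+\cdots+a_mw_m$. Using completeness once more, $-(a_1w_1+\cdots+a_mw_m)$ lies in the relative interior of some cone $\langle z_1,\dots,z_p\rangle\in\Delta$, so $a_1w_1+\cdots+a_mw_m+e_1z_1+\cdots+e_pz_p=0$ is a relation with all coefficients positive, and $\{w_1,\dots,w_m,z_1,\dots,z_p\}$ is again a non-face. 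Feeding this \lq completing\rq\ relation into the inequality above and comparing $\varphi$-values inside a single cone of $\Delta$, one extracts a primitive collection whose monitored quantity $-\varphi(S(\,\cdot\,))\ge 0$ strictly decreases. Since $G(\Delta)$ is finite and this quantity is bounded below by $0$, after finitely many steps one reaches a primitive collection with $\varphi(S(\mathcal P))=0$, hence $\sigma(\mathcal P)=\{0\}$. The technical heart — and where I expect the real work — is the exchange step: the obvious candidate $(\mathcal P\setminus\{v_i\})\cup\{w_1,\dots,w_m\}$ need not be a non-face, and even when one produces a non-face the integer coefficients $a_l\ge 1$ can defeat naive bookkeeping, so one genuinely has to bring in the completing relation at $-S(\mathcal P)$ and localize the comparison of $\varphi$ to one cone. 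That some care is unavoidable is already visible on Hirzebruch surfaces: for a badly chosen ample class the primitive collection of least total $b$-weight is the one with \emph{nonzero} focus, so the crudest minimization does not by itself succeed. Everything else — existence of the strictly convex $\varphi$, the displayed inequality, and the reduction to finding a primitive collection of trivial focus — is routine.
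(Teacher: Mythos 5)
The paper does not reprove this statement at all: it is quoted directly from Batyrev (\cite[3.2 Proposition]{batyrev}), so the only fair comparison is with Batyrev's argument, which is exactly the route you sketch. Your setup is fine and essentially routine, as you say: a strictly convex support function $\varphi$ with $\varphi<0$ off the origin, the superadditivity $\sum_i\varphi(u_i)\le\varphi(\sum_i u_i)$ with equality precisely when the $u_i$ lie in a common cone, the reformulation ``$\sigma(\mathcal P)=\{0\}$ iff $\varphi(S(\mathcal P))=0$'', and the existence of at least one primitive collection (minimal non-face of a complete simplicial fan).

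The genuine gap is the step you yourself flag as the ``technical heart'': you assert that from a primitive collection with nonzero focus one can ``extract a primitive collection whose monitored quantity $-\varphi(S(\cdot))$ strictly decreases'', but no argument is given. That exchange/descent step is not bookkeeping --- it is the entire content of Batyrev's Proposition 3.2. As you note, the naive candidate $(\mathcal P\setminus\{v_i\})\cup\{w_1,\dots,w_m\}$ need not be a non-face, and even after passing to a minimal non-face inside your ``completed'' set $\{w_1,\dots,w_m,z_1,\dots,z_p\}$ you have no comparison between its $\varphi(S(\cdot))$-value and that of $\mathcal P$; nothing in the proposal rules out the quantity increasing or cycling. (There are also small loose ends --- e.g.\ the coefficients $e_j$ in the completing relation are a priori only positive reals, and putting the origin in the interior of the polytope may require $\mathbb R$- or $\mathbb Q$-divisors --- but these are easily repaired.) So what you have is a correct reduction plus an accurate description of where the difficulty lies, not a proof: the decisive combinatorial exchange argument must either be carried out in detail or, as the paper does, be delegated to the citation of Batyrev.
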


We close this section with an elementary remark. 

\begin{rem}
If $k=n+1$ in Proposition \ref{25}, 
then $X(\Delta)\simeq \mathbb P^n$. 
\end{rem}

Therefore, a smooth projective 
toric variety $X$ is \lq general\rq\ if and only if 
$X$ is isomorphic to the projective 
space. 
By this reason, it is not so easy to 
construct smooth projective toric varieties 
on which any nontrivial 
nef line bundles are big. 

\section{Examples}\label{sec4}
First, let us recall the following example, 
which is not a toric variety. 
For the details, see \cite{morimukai} and 
\cite[p.~67]{matsuki}. 

\begin{ex}[{\cite[no.~30 Table 2]{morimukai}}]\label{41}
Let $X$ be the blowing-up of $\mathbb P^3_{\mathbb C}$ along 
a smooth conic. 
Then $X$ is a smooth Fano threefold with $\rho (X)=2$. 
It is known that $X$ has two extremal divisorial 
contractions. 
One contraction is the inverse of the blowing-up $X\to \mathbb P^3$. 
Another one is a contraction of $\mathbb P^2$ on 
$X$ into a smooth point. 
Therefore, it is not difficult to see that every nef Cartier divisor 
$D\not \equiv 0$ is big. 
\end{ex}

The next example is the main theme of this short note. 
It is hard for the non-experts to find it. 
Therefore, we think it is worthwhile to describe it 
explicitly here. 
 
\begin{ex}\label{321}
We put $v_1=(1,0,0), v_2=(0,1,0), 
v_3=(0,0,1)$, and $v_4=(-1, -1, -1)$. 
We consider the standard fan of $\mathbb P^3$ generated 
by $v_1, v_2, v_3$, and $v_4$. 
We subdivide the cone $\langle v_1, v_2, v_4\rangle$ as 
follows. Take a blow-up $X_1\to \mathbb P^3$ along the 
vector $v_5=(1, -1, -2)=3v_1+v_2+2v_4$. 
We take a blow-up $X_2\to X_1$ along the 
vector $v_6=(1, 0, -1)=\frac{1}{2}(v_1+v_2+v_5)$ and 
a blow-up $X_3\to X_2$ along 
$v_7=(0, -1, -2)=\frac{1}{3}(v_2+2v_4+2v_5)$. 
Finally, we take a blow-up $X_3$ along the vector $v_8=(0, 0, -1)
=\frac{1}{2}(v_2+v_7)$ and obtain $X$. 
Then, it is obvious that $X$ is projective 
and $\rho (X)=5$. 
It is easy to see that $X$ is smooth. 
In this case, $\NE(X)$ is spanned by 
the following five extremal primitive relations, 
$v_1+v_2+v_5-2v_6=0$, 
$v_4+v_5+v_8-2v_7=0$, 
$v_2+v_7-2v_8=0$, 
$v_6+v_8-v_2-v_5=0$, 
and $v_3+v_5-2v_1-v_4=0$. 
This toric variety $X$ is nothing but the 
one labeled as [8-10] in \cite[Theorem 9.6]{tata}. 
The picture below helps us understand the combinatorial 
data of $X$. 

\begin{center}
\scalebox{1.0}{
\unitlength 0.1in
\begin{picture}( 36.1700, 29.4800)( 14.1000,-31.0600)
%
\special{pn 20}%
\special{pa 3332 376}%
\special{pa 3332 376}%
\special{fp}%
\special{pa 3332 376}%
\special{pa 1732 3096}%
\special{fp}%
%
\special{pn 20}%
\special{pa 1732 3096}%
\special{pa 4932 3096}%
\special{fp}%
%
\special{pn 20}%
\special{pa 4932 3096}%
\special{pa 3332 376}%
\special{fp}%
%
\special{pn 20}%
\special{pa 3012 2136}%
\special{pa 3332 376}%
\special{fp}%
%
\special{pn 20}%
\special{pa 3012 2136}%
\special{pa 1732 3096}%
\special{fp}%
%
\special{pn 20}%
\special{pa 3012 2136}%
\special{pa 4932 3096}%
\special{fp}%
%
\special{pn 20}%
\special{pa 2372 2616}%
\special{pa 3332 376}%
\special{fp}%
%
\special{pn 20}%
\special{pa 2372 2616}%
\special{pa 3972 2616}%
\special{fp}%
%
\special{pn 20}%
\special{pa 3972 2616}%
\special{pa 3332 376}%
\special{fp}%
%
\special{pn 20}%
\special{pa 2372 2616}%
\special{pa 4932 3096}%
\special{fp}%
%
\special{pn 20}%
\special{pa 2852 2936}%
\special{pa 2372 2616}%
\special{fp}%
%
\special{pn 20}%
\special{pa 2852 2936}%
\special{pa 1732 3096}%
\special{fp}%
%
\special{pn 20}%
\special{pa 2852 2936}%
\special{pa 4932 3096}%
\special{fp}%
\put(50.2700,-31.9100){\makebox(0,0)[lb]{$\mathbf{v_2}$}}%
\put(22.3500,-25.5900){\makebox(0,0){$\mathbf{v_5}$}}%
\put(26.5900,-29.1900){\makebox(0,0)[rb]{$\mathbf{v_6}$}}%
\put(16.3500,-31.9100){\makebox(0,0){$\mathbf{v_1}$}}%
\put(33.3300,-2.4300){\makebox(0,0){$\mathbf{v_4}$}}%
\put(39.9300,-26.1300){\makebox(0,0)[lb]{$\mathbf{v_8}$}}%
\put(30.5300,-21.5300){\makebox(0,0)[lb]{$\mathbf{v_7}$}}%
\end{picture}%
}
\end{center}
\begin{claim}
There are no projective surjective toric 
morphism $f:X\to Y$ with 
$\dim Y=1$ or $2$. 
\end{claim}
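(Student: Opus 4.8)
The plan is to turn a hypothetical fibration into incompatible combinatorics of the fan $\Delta$ of $X$, splitting into the two cases $\dim Y=1$ and $\dim Y=2$. Assume for contradiction that $f\colon X\to Y$ is a projective surjective toric morphism with $d:=\dim Y\in\{1,2\}$. Then $f$ is induced by a linear map $\phi\colon N_{\mathbb R}\to(N_Y)_{\mathbb R}$ that is surjective (because $f$ is dominant) and compatible with the fans, i.e.\ every cone of $\Delta$ is mapped into some cone of the complete fan $\Delta_Y$. Set $W:=\ker\phi$, a subspace of $N_{\mathbb R}\cong{\mathbb R}^3$ of dimension $3-d$. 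I would use the explicit structure of $\Delta$: besides the three cones $\langle v_1,v_2,v_3\rangle,\langle v_1,v_3,v_4\rangle,\langle v_2,v_3,v_4\rangle$ inherited from ${\mathbb P}^3$, its maximal cones are those into which $\langle v_1,v_2,v_4\rangle$ is subdivided by the four blow-ups; one reads off in particular that $\langle v_1,v_4,v_5\rangle$, $\langle v_2,v_4,v_8\rangle$ and $\langle v_2,v_5,v_8\rangle$ are among the maximal cones.

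In the case $d=1$ one has $Y\simeq{\mathbb P}^1$, so after rescaling $\phi$ is given by a primitive vector $u=(p,q,r)\in M:=N^{\vee}$, and compatibility with the fan $\{0,{\mathbb R}_{\ge0},{\mathbb R}_{\le0}\}$ of ${\mathbb P}^1$ amounts to: on the generators of each maximal cone of $\Delta$ the numbers $\langle u,v_i\rangle$ are all $\ge0$ or all $\le0$. Applying this to $\langle v_1,v_2,v_3\rangle$ (and replacing $u$ by $-u$ if needed) gives $p,q,r\ge0$; then $\langle v_2,v_4,v_8\rangle$ forces $q=0$ and $\langle v_2,v_3,v_4\rangle$ forces $r=0$, so $u=(1,0,0)$ (as $u\neq0$). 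But on $\langle v_1,v_4,v_5\rangle$ we get $\langle u,v_1\rangle=1>0$ and $\langle u,v_4\rangle=-1<0$, contradicting the compatibility condition.

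In the case $d=2$ the kernel $W$ is a line. First I would show $W$ is a union of cones of $\Delta$: if $x\in W\setminus\{0\}$ lies in the relative interior of a cone $\tau\in\Delta$, then $0=\phi(x)$ lies in the relative interior of $\phi(\tau)$, so $\phi(\tau)$ is a linear subspace; as $\phi(\tau)$ sits inside a strongly convex cone of $\Delta_Y$ this forces $\phi(\tau)=\{0\}$, i.e.\ $\tau\subset W$. Hence the line $W$ is covered by cones of $\Delta$ of dimension $\le1$, so $W={\mathbb R}_{\ge0}v_i\cup{\mathbb R}_{\ge0}v_j$ with $v_j=-v_i$ for two ray generators of $\Delta$. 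The only antipodal pair among $v_1,\dots,v_8$ is $\{v_3,v_8\}$ (indeed $v_8=-v_3$), so $W={\mathbb R}v_3$ and, in suitable coordinates on $N_Y=N/{\mathbb Z}v_3$, $\phi$ is the projection $(x,y,z)\mapsto(x,y)$. Then $\phi(\langle v_1,v_3,v_4\rangle)=\operatorname{Cone}((1,0),(-1,-1))$, which contains $(1,-1)=2(1,0)+(-1,-1)$ in its interior, and $\phi(\langle v_2,v_5,v_8\rangle)=\operatorname{Cone}((0,1),(1,-1))$. If $\tau_1,\tau_2\in\Delta_Y$ contain these two images respectively, then $(1,-1)$ lies in $\operatorname{int}\tau_1$ and in $\tau_2$, which forces $\tau_1\subset\tau_2$ by the fan axioms; hence $\tau_2$ contains $(1,0),(-1,-1)$ and $(0,1)$, so it contains $(1,0)$ together with $(0,1)+(-1,-1)=(-1,0)$, hence the whole line ${\mathbb R}(1,0)$ — impossible, as $\tau_2$ is strongly convex. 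This proves the Claim.

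The main obstacle is the case $d=2$, where all fibrations over a surface have to be ruled out uniformly: the decisive points are the argument that $\ker\phi$ must be spanned by an antipodal pair of rays of $\Delta$, pinning it down to ${\mathbb R}v_3$, and then locating two explicit maximal cones of $\Delta$ whose images under the quotient map cannot be housed inside cones of a single fan. An alternative, more computational line (closer to the primitive-relation bookkeeping above) would be to compute $\Nef(X)=\NE(X)^{\vee}\subset N^1(X)\cong{\mathbb R}^5$, determine generators $N_1,\dots,N_s$ of its extremal rays, and check $N_j^3>0$ for every $j$; since a product of three nef classes is always nonnegative, any nonzero nef class $D=\sum t_jN_j$ then satisfies $D^3\ge\sum t_j^3N_j^3>0$ and so is big, which forces the Claim (a pullback $f^{*}(\text{ample})$ is a nonzero nef class of vanishing cube) and simultaneously gives the assertion of Example \ref{321}.
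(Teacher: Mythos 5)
Your argument is correct and is essentially the paper's own proof: the paper likewise pins the map down to the projection $(x,y,z)\mapsto(x,y)$ via the unique antipodal pair $v_3+v_8=0$ and derives the contradiction from the images of two maximal cones (it uses $\langle v_1,v_4,v_5\rangle$, whose image $\langle (1,0),(-1,-1)\rangle$ is the same cone as that of your $\langle v_1,v_3,v_4\rangle$), while disposing of $\dim Y=1$ by the remark that $X$ is obtained by subdividing only the cone $\langle v_1,v_2,v_4\rangle$ of the fan of $\mathbb P^3$. Your write-up simply makes explicit the steps the paper leaves implicit (that $\ker\phi$ must be a union of rays of $\Delta$, hence spanned by an antipodal pair, and the fan-axiom contradiction on the target), so it is the same approach in more detail.
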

\begin{proof}
The variety $X$ is obtained by successive 
blowing-ups of $\mathbb P^3$ inside 
the cone $\langle v_1, v_2, v_4\rangle$. 
So, $X$ does not admit to a morphism to a curve. 
Thus, we have to consider the case when $Y$ is a surface. 
By considering primitive relations, $f:X\to Y$ must be 
induced by the projection $\mathbb Z^3\to \mathbb Z^2: 
(x, y, z)\mapsto (x, y)$ because 
$v_3+v_8=0$. The image of the cone 
$\langle v_2, v_5, v_8\rangle$ is the 
cone spanned by $(0, 1)$ and $(1, -1)$. 
On the other hand, the image of the cone 
$\langle v_1, v_4, v_5\rangle$ is the cone 
spanned by $(1, 0)$ and $(-1, -1)$. 
Therefore, there are no surjective morphisms 
$f:X\to Y$ with $\dim Y=2$. 
\end{proof}
Thus, every nef divisor $D\not \sim 0$ is big, 
that is, $\partial \Nef (X)\cap \partial \PE(X)
=\{0\}$. 
\end{ex} 

By the following example, the reader 
understands the advantage of using the toric geometry to 
construct examples. 
We do not know what happens if we take blow-ups 
of $X$ in Example \ref{41}. 

\begin{ex}\label{43}
By taking blowing-ups inside the cone $\langle v_5, v_7, v_8\rangle$ in Example \ref{321}, we obtain a smooth projective toric 
threefold $X_k$ for any $k\geq 6$ such that 
$\rho (X_k)=k$ and 
$\partial \Nef (X_k)\cap \partial \PE(X_k)=\{0\}$, that is, 
every nef divisor $D\not \sim 0$ on $X_k$ is big. 
More explicitly, for example, $X_6$ is the 
blow-up of $X$ along $u_6=v_5+v_7+v_8$ and 
$X_{k+1}$ is the blow-up of $X_k$ along 
$u_{k+1}=v_5+v_7+u_k$ for $k\geq 6$. 
\end{ex}

We can easily check that any smooth projective 
toric 
threefolds of Picard number $2\leq \rho \leq 4$ 
have some nontrivial non-big nef line bundles by the 
classification table in \cite[Theorem 9.6]{tata}. 
For smooth non-projective toric variety, the following 
example will help the reader. It is the most famous example of 
smooth complete non-projective toric 
threefold. 

\begin{ex}
Let $\Delta$ be the fan whose 
rays are spanned by 
$v_1=(1, 0, 0)$, $v_2=(0, 1, 0)$, 
$v_3=(0, 0, 1)$, $v_4=(-1, -1, -1)$, 
$v_5=(0, -1, -1)$, $v_6=(-1, 0, -1)$, 
$v_7=(-1, -1, 0)$, and 
whose maximal cones are 
$\langle v_1, v_2, v_3\rangle$, 
$\langle v_4, v_5, v_6\rangle$, 
$\langle v_4, v_6, v_7\rangle$, 
$\langle v_4, v_5, v_7\rangle$, 
$\langle v_1, v_2, v_5\rangle$, 
$\langle v_2, v_5, v_6\rangle$, 
$\langle v_2, v_3, v_6\rangle$, 
$\langle v_3, v_6, v_7\rangle$, 
$\langle v_1, v_3, v_7\rangle$, 
$\langle v_1, v_5, v_7\rangle$. 
Then $X=X(\Delta)$ is the most famous non-projective 
smooth toric threefold with $\rho (X)=4$ obtained by 
Miyake and Oda. 
By removing three two-dimensional 
walls $\langle v_1, v_7\rangle$, $\langle v_2, v_5\rangle$, 
and $\langle v_3, v_6\rangle$ from 
$\Delta$, we obtain a flopping contraction 
$f:X\to Y$. 
It is easy to see that 
$Y$ is a projective toric threefold 
with $\rho (Y)=2$ and three ordinary double 
points. 
We can check that every nef divisor 
$D$ can be written as $D=f^*D'$ for some nef 
divisor $D'$ on $Y$. 
On the other hand, $\Nef (Y)$ is a two dimensional 
cone and every nef divisor on $Y$ is big. 
Therefore, $\Nef (X)$ is also 
two-dimensional and all the 
nef divisors on $X$ are big. 
We note that $\Nef (X)$ is thin in $N^1(X)$ by Kleiman's 
ampleness criterion since $X$ is a smooth complete 
non-projective variety. 
\end{ex}

The reader can find many smooth complete 
non-projective toric threefolds $X$ with $\Nef (X)=\{0\}$ 
in \cite{smooth}. 

\section{Miscellaneous comments}\label{sec5}
In this final section, we collect miscellaneous results. 
First, we explain how to generalize Examples \ref{321} and 
\ref{43} in dimension $\geq 4$. 

\begin{say}We put 
$v_1=(1, 0, \cdots, 0)$, 
$v_2=(0, 1, 0, \cdots, 0)$, 
$v_3=(0, 0, 1, 0, \cdots, 0)$, 
$v_4=(-1, -1, \cdots, -1)\in N=\mathbb Z^n$. 
We consider $w_1=(0, 0, 0, 1, 0, \cdots, 0)$, 
$w_2=(0, 0, 0, 0, 1, 0, \cdots, 0)$, 
$\cdots$, $w_{n-3}=(0, \cdots, 0, 1)\in N$. 
By these vectors, we can construct a fan corresponding 
to $\mathbb P^n$ as usual. 
We take $v_5=3v_1+v_2+2v_4=
(1, -1, -2, \cdots, -2)$, 
$v_6=\frac{1}{2}(v_1+v_2+v_5)
=(1, 0, -1, \cdots, -1)$, 
$v_7=\frac{1}{3}(v_2+2v_4+2v_5)
=(0, -1, -2, \cdots, -2)$, 
and $v_8 =\frac{1}{2}(v_2+v_7)=(0, 0, -1, \cdots, -1)$. 
We take a sequence of blow-ups 
$$
X\to X_3\to X_2\to X_1\to \mathbb P^n
$$ 
as in Examples \ref{321}. 
In this case, the center of each blow-up 
is $(n-3)$-dimensional. We can 
easily check that 
$X$ is a smooth projective toric $n$-fold. 
We note that $v_3+w_1+\cdots+w_{n-3}+v_8=0$. 
\begin{claim}
If $f: X\to Y$ is a proper surjective toric morphism 
and $Y$ is not a point, then $\dim Y=n$. 
\end{claim}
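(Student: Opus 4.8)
The plan is to translate everything into the combinatorics of the fan $\Delta$ of $X$ and argue by contradiction. Since $X$ is complete, a proper surjective toric morphism $f\colon X\to Y$ has $Y$ complete, so it is induced by a lattice homomorphism $\phi\colon N\to N'$, which we may assume surjective; write $d=\dim Y$ and $N_0=\ker\phi$, a saturated sublattice of $N$ of rank $n-d$. By the standard theory of toric morphisms (see e.g.\ \cite{fulton}), $f$ being a morphism forces $\phi(\sigma)$ to be strongly convex for every $\sigma\in\Delta$, and $f$ being proper forces the subfan $\Delta_0:=\{\sigma\in\Delta\mid\sigma\subseteq (N_0)_{\mathbb R}\}$ to be a complete fan in $(N_0)_{\mathbb R}$. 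Assume $0<d<n$, so that $(N_0)_{\mathbb R}$ is a proper nonzero subspace; I want a contradiction.

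First I would list the maximal cones of $\Delta$ by following the four star subdivisions along $v_5,v_6,v_7,v_8$, all carried out inside $\langle v_1,v_2,v_4\rangle$ and its faces. In particular the maximal cones $\langle v_1,v_2,v_3,w_1,\dots,w_{n-3}\rangle$, $\langle v_1,v_3,v_4,w_1,\dots,w_{n-3}\rangle$, $\langle v_2,v_3,v_4,w_1,\dots,w_{n-3}\rangle$ of $\mathbb P^n$ survive unchanged, the recursion shows that $\langle v_3,v_8\rangle$ and $\langle v_3,v_7\rangle$ are \emph{not} cones of $\Delta$ (indeed $v_8$ lies in the relative interior of $\langle v_2,v_7\rangle$ and $v_7$ in the relative interior of $\langle v_2,v_4,v_5\rangle$, so neither star meets $v_3$), and $\langle v_1,v_4,v_5,w_1,\dots,w_{n-3}\rangle$ and $\langle v_2,v_5,v_8,w_1,\dots,w_{n-3}\rangle$ are maximal cones of $\Delta$.

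Next, the reduction. The rays of $\Delta_0$ lie among $v_1,\dots,v_8,w_1,\dots,w_{n-3}$ and, as $\Delta_0$ is complete, positively span $(N_0)_{\mathbb R}$; hence some subset of these rays carries a linear relation with strictly positive coefficients whose support spans $(N_0)_{\mathbb R}$. Enumerating such relations is routine: only $v_4$ has negative first coordinate, only $v_4$ and $v_7$ have negative second coordinate, and for $j\ge 4$ the $e_j$-coordinate of every ray other than $w_{j-3}$ is $\le 0$, so matching coordinates forces each relation either to have support spanning all of $N_{\mathbb R}$ (so $d=0$, excluded) or to have support $\{v_3,v_8,w_1,\dots,w_{n-3}\}$ (spanning the $(n-2)$-plane $W=\langle e_3,\dots,e_n\rangle$, via $v_3+w_1+\dots+w_{n-3}+v_8=0$) or $\{v_2,v_3,v_7,v_8,w_1,\dots,w_{n-3}\}$ (spanning $\langle e_2,\dots,e_n\rangle$). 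Thus $(N_0)_{\mathbb R}$ is one of these two subspaces. Comparing with the cone structure—using that $\langle v_3,v_8\rangle$ and $\langle v_3,v_7\rangle$ are not cones of $\Delta$—the cones of $\Delta$ contained in $W$, respectively in $\langle e_2,\dots,e_n\rangle$, fail to cover it as soon as $n\ge 4$ (for example $e_3-e_4$ is not covered in $W$), so $\Delta_0$ cannot be complete there. The only surviving case is $n=3$ with $(N_0)_{\mathbb R}=\langle e_3\rangle$, whence $d=2$ and, up to an automorphism of $N'$, $\phi$ is the projection $(x_1,\dots,x_n)\mapsto (x_1,x_2)$.

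This case is eliminated exactly as in Example \ref{321}, and the argument works verbatim in all dimensions: the maximal cones $\langle v_1,v_4,v_5,w_1,\dots,w_{n-3}\rangle$ and $\langle v_2,v_5,v_8,w_1,\dots,w_{n-3}\rangle$ map under $\phi$ onto $\langle (1,0),(-1,-1)\rangle$ and $\langle (0,1),(1,-1)\rangle$, two strongly convex $2$-cones in $\mathbb R^2$ whose relative interiors meet but whose union generates all of $\mathbb R^2$; each must lie in a $2$-dimensional cone of the fan of $Y$, those two cones then have overlapping interiors and so coincide, and that single cone would contain both images and hence fail to be strongly convex—a contradiction. Therefore $d\in\{0,n\}$. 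I expect the enumeration in the reduction step to be the main obstacle: one must control which subsets of rays span cones of $\Delta$ well enough to pin down all subspaces $U$ with $\Delta\cap U$ complete, and to see that the auxiliary rays $w_i$—each forced into every positive relation by the coordinate matching—introduce nothing beyond the two subspaces already present when $n=3$. The non-face statements for $\langle v_3,v_8\rangle$ and $\langle v_3,v_7\rangle$ together with this enumeration are where the bookkeeping lies, whereas the closing contradiction is literally the one in Example \ref{321}.
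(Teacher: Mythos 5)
Your overall route is the paper's: reduce, via positive linear relations among the rays, to the possibility that $f$ is induced by the projection $(x_1,\dots,x_n)\mapsto(x_1,x_2)$, and then kill that case by the image-overlap argument of Example \ref{321}; the closing contradiction is correct and, as you say, works in every dimension. The genuine problem is the intermediate combinatorial claim, which is false for $n\ge 4$, and with it the assertion that ``the only surviving case is $n=3$''. Indeed $\langle v_3,v_8\rangle$ and $\langle v_3,v_7\rangle$ \emph{are} cones of $\Delta$ once $n\ge 4$: the maximal cone of the fan of $\mathbb P^n$ omitting $w_j$ contains $v_3$ together with $v_1,v_2,v_4$, so the four star subdivisions do meet $v_3$ there and produce maximal cones such as $\langle v_2,v_3,v_4,v_8,w_1,\dots,\widehat{w_j},\dots,w_{n-3}\rangle$ and $\langle v_3,v_4,v_7,v_8,w_1,\dots,\widehat{w_j},\dots,w_{n-3}\rangle$, of which $\langle v_3,v_8\rangle$ and $\langle v_3,v_7\rangle$ are faces (your parenthetical ``neither star meets $v_3$'' is only valid when $n=3$). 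Consequently the subfan of cones of $\Delta$ contained in $W=\langle e_3,\dots,e_n\rangle$ is in fact \emph{complete}: its maximal cones are $\langle v_3,w_1,\dots,w_{n-3}\rangle$, $\langle v_8,w_1,\dots,w_{n-3}\rangle$ and $\langle v_3,v_8,w_1,\dots,\widehat{w_j},\dots,w_{n-3}\rangle$, i.e.\ the fan of $\mathbb P^{n-2}$ on $v_3,v_8,w_1,\dots,w_{n-3}$ coming from the relation $v_3+w_1+\cdots+w_{n-3}+v_8=0$; in particular your test vector is covered, since $e_3-e_4=2v_3+v_8+w_2+\cdots+w_{n-3}$. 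So the $d=2$ candidate is \emph{not} excluded by the completeness criterion in any dimension; it can only be excluded by the overlap argument, exactly as the paper does, and your proof survives only because you happen to state that argument for all $n$, not because the case reduces to $n=3$.

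Two smaller points. The coordinate bookkeeping in the enumeration is also off ($v_5$ has second coordinate $-1$ as well), although the list of possible kernels you arrive at, namely $\langle e_3,\dots,e_n\rangle$ and $\langle e_2,\dots,e_n\rangle$, is correct. And the exclusion of the kernel $\langle e_2,\dots,e_n\rangle$ (i.e.\ $\dim Y=1$) is true but needs a justification independent of the false non-face claims: for instance, $v_7$ is the only ray in that hyperplane with negative second coordinate, and writing $-e_2=a_3v_3+a_7v_7+a_8v_8+\sum_j c_jw_j$ with nonnegative coefficients forces $a_7=1$, $a_3\ge 2$ and all $c_j>0$, while no cone of $\Delta$ contains $v_3$, $v_7$ and all the $w_j$ simultaneously; hence that subfan is not complete. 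With the false reduction deleted and the overlap argument carrying the $d=2$ case in all dimensions, your argument becomes essentially the paper's proof.
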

\begin{proof}[Proof of {\em{Claim}}]
By considering linear 
relations among $v_1, v_2, \cdots, 
v_8, w_1,$ $\cdots, w_{n-3}$ 
as in Definition \ref{11}, 
$f$ should be induced by the projection 
$\mathbb Z^n\to \mathbb Z^2: (x_1, x_2, \cdots, 
x_n)\mapsto (x_1, x_2)$ if 
$\dim Y<n$. 
By the same arguments as in the proof of 
Claim in Example \ref{321}, 
it can not happen. 
Therefore, we obtain 
$\dim Y=n$. 
\end{proof}
Thus, any nontrivial 
nef line bundles on $X$ are 
big. 
\end{say}
So, for any $(n, \rho)$, where 
$n\geq 4$ and $\rho \geq 5$, 
we can construct a 
smooth projective 
toric $n$-fold $X$ with $\rho (X)=\rho$ on which 
any nontrivial nef line bundles are big (cf.~Example \ref{43}). 
We leave the details for the reader's exercise. 
The next one is a higher dimensional 
analogue of \cite{smooth}. 

\begin{say}[Smooth complete toric varieties with no nontrivial nef line bundles]
Let $X$ be a smooth complete toric variety 
with no nontrivial 
nef line bundles. We put 
$\mathcal E=\mathcal O_X^{\oplus k}\oplus \mathcal L$ 
for $k\geq 1$, where 
$\mathcal L$ is a nontrivial line bundle on $X$. 
We consider the $\mathbb P^k$-bundle 
$\pi:Y=\mathbb P_X(\mathcal E)\to X$. 
Then $Y$ is a $(\dim X+k)$-dimensional 
complete toric variety. 
It is easy to 
see that there are no nontrivial nef line bundles on $Y$. 
So, for $n\geq 4$, we 
can construct many $n$-dimensional 
smooth complete toric varieties of Picard number $\geq 6$ 
with no nontrivial nef line bundles by \cite{smooth}. 
\end{say}

Finally, we close this note with an easy result. 
We treat the other extreme case:~$\Nef (X)=\PE(X)$. 

\begin{prop}
Let $X$ be a $\mathbb Q$-factorial 
projective toric variety with 
$\rho (X)=\rho$. 
Assume that $\Nef (X)=\PE(X)$, that is, 
every effective divisor is nef. 
Then there is a finite toric morphism 
$\mathbb P^{n_1}\times \cdots \times 
\mathbb P^{n_\rho}\to X$ with 
$n_1+\cdots+ n_{\rho}=\dim X$. 
When $X$ is smooth, 
$X\simeq \mathbb P^{n_1}\times \cdots \times \mathbb P^{n_\rho}$ with $n_1+\cdots+ n_{\rho}=\dim X$. 
\end{prop}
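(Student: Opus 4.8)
The plan is to exploit the combinatorial description of $\Nef(X)$ and $\PE(X)$ for a $\mathbb Q$-factorial projective toric variety via primitive collections and relations. Recall that $\PE(X)$ is generated by the classes of the torus-invariant prime divisors $D_1,\dots,D_m$ corresponding to the rays $v_1,\dots,v_m$ of the fan $\Delta$, while $\Nef(X)$ is dual to $\NE(X)$, which (by the theorem of Batyrev recalled above) is generated by the primitive relations. The hypothesis $\Nef(X)=\PE(X)$ says precisely that every $D_i$ is nef, i.e.\ $D_i\cdot C\geq 0$ for every invariant curve $C$. I would translate this into the statement that for every wall (codimension-one cone) $\tau=\langle v_{i_1},\dots,v_{i_{n-1}}\rangle$ with the two maximal cones $\langle\tau,v_j\rangle$ and $\langle\tau,v_{j'}\rangle$ on either side, the wall relation $v_j+v_{j'}+\sum a_k v_{i_k}=0$ (with the $a_k\in\mathbb Q$ determined by the geometry) has $D_i\cdot C_\tau\geq 0$ for all $i$; since $D_i\cdot C_\tau$ is essentially minus the coefficient of $v_i$ in the wall relation (up to the relevant self-intersection normalization), nefness of all $D_i$ forces all the coefficients $a_k$ of the "interior" rays $v_{i_k}$ in every wall relation to be $\leq 0$.

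Next I would use Proposition~\ref{25}: there is a primitive collection $\mathcal P=\{u_1,\dots,u_k\}$ whose primitive relation is $u_1+\cdots+u_k=0$. The idea is that once one such "balanced" relation with all positive coefficients exists, the nefness-of-all-$D_i$ condition should be rigid enough to propagate: any ray not in $\mathcal P$ would, via a chain of wall relations, be forced to appear with a wrong-sign coefficient somewhere, contradiction. More concretely, I expect to argue that the primitive collection $\mathcal P$ must in fact be a set of rays spanning a linear subspace $V=\langle u_1,\dots,u_k\rangle_{\mathbb R}$ which splits off as a direct summand of $N_{\mathbb R}$, that the corresponding cones form a complete fan in $V$ isomorphic to the fan of $\mathbb P^{k-1}$, and that $\Delta$ decomposes as a "product-like" fan with respect to this splitting. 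Iterating on the complementary summand gives a splitting $N=N_1\oplus\cdots\oplus N_\rho$ in which $\Delta$ refines the product of the $\mathbb P^{n_i}$-fans, equivalently a finite toric morphism $\mathbb P^{n_1}\times\cdots\times\mathbb P^{n_\rho}\to X$; the count $\rho(X)=\rho$ matches because each projective-space factor contributes exactly one to the Picard number and the product has Picard number $\rho$, forcing no further subdivision at the level of Picard number, i.e.\ $\rho=\rho(X)$ pins down the number of factors and $\sum n_i=\dim X$.

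For the smooth case I would upgrade "finite toric morphism $\mathbb P^{n_1}\times\cdots\times\mathbb P^{n_\rho}\to X$" to an isomorphism: a finite surjective toric morphism between smooth toric varieties that is induced by an inclusion of lattices $N'\hookrightarrow N$ of the same rank with $\Delta'$ mapping onto $\Delta$; since $X$ is smooth, every maximal cone of $\Delta$ is generated by a lattice basis of $N$, and the preimage cones in $\Delta'$ are already the maximal cones of the product fan generated by a basis of $N'$, so comparing indices forces $N'=N$ and $\Delta'=\Delta$. Thus $X\simeq\mathbb P^{n_1}\times\cdots\times\mathbb P^{n_\rho}$.

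The main obstacle I anticipate is the propagation/rigidity step in the second paragraph: showing that the single balanced primitive relation from Proposition~\ref{25}, together with the sign constraints on all wall relations, actually forces the global product structure rather than merely some local structure near $\mathcal P$. I would handle this by induction on $\rho(X)$: having identified the $\mathbb P^{k-1}$-factor, I would produce the contracted fan on the complementary lattice (a toric variety $X'$ with $\rho(X')=\rho-1$), check that $\Nef(X')=\PE(X')$ is inherited (nefness of the remaining invariant divisors descends because their intersection numbers with the remaining curves are unchanged), and apply the inductive hypothesis; the base case $\rho=1$ is exactly the remark following Proposition~\ref{25}, giving $X\cong\mathbb P^n$ in the smooth case and a finite cover of a fake weighted projective space in general.
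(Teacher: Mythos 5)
Your route is genuinely different from the paper's (the paper runs an induction on $\rho$ via Fano contractions, the projective-space-bundle structure of the contraction, Lemma \ref{key}, and in the singular case a normalization of the fiber product $W\times_Y X$), but as written it has a gap at the decisive step. The crux of your argument --- that the balanced primitive relation from Proposition \ref{25}, together with the sign constraints coming from nefness of all invariant divisors, forces $\Delta$ to split off a $\mathbb P^{k-1}$-factor as a genuine product --- is exactly what you leave unproved, and the induction you propose does not supply it. Contracting the extremal ray spanned by that relation gives a fiber-type contraction $f\colon X\to X'$ with $\mathbb P^{k-1}$-fibers, and your inductive hypothesis identifies $X'$; but nothing in your sketch rules out a \emph{nontrivial} fibration over $X'$. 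For instance, $\mathbb F_1=\mathbb P_{\mathbb P^1}(\mathcal O\oplus\mathcal O(1))$ has the balanced primitive relation $v+(-v)=0$ and is a $\mathbb P^1$-bundle over $\mathbb P^1$, yet its fan is not a product; it is excluded only because the $(-1)$-section is effective and non-nef, and you never show how your wall-relation inequalities carry out this exclusion in general. (Also your sign is backwards: with the wall relation normalized so that the two rays outside the wall have coefficient $1$, one has $D_{i_k}\cdot C_\tau=+a_k$ up to a positive factor, so nefness forces $a_k\ge 0$, not $a_k\le 0$.) This bundle-triviality step is precisely where the paper spends its effort: in the smooth case it splits the projective-space bundle using Lemma \ref{key} (every invariant subvariety again satisfies $\Nef=\PE$), and in the singular case it shows the relevant invariant subvarieties $U$ of the normalized fiber product are $W\times\mathbb P^1$ before concluding $V\simeq W\times F$.

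Two further problems in the merely $\mathbb Q$-factorial case. First, the primitive-collection machinery you invoke (Batyrev's Theorem 2.15 and Proposition \ref{25}) is stated in this paper only for non-singular fans, so you would need the simplicial generalization, or else a different source of the fiber-type contraction; the paper instead observes directly that $\Nef(X)=\PE(X)$ forces every extremal ray to be of Fano type, because a birational extremal contraction produces an effective divisor that is negative on the ray. Second, for the conclusion ``finite toric morphism $\mathbb P^{n_1}\times\cdots\times\mathbb P^{n_\rho}\to X$'' it is not enough that the product fan \emph{refines} the pullback of $\Delta$ to a finite-index sublattice --- a strict refinement gives a proper but non-finite morphism --- you need equality of the fans, which is again the product-splitting statement whose proof is missing. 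So while your combinatorial strategy is viable in principle (in the smooth case it is close to known primitive-relation proofs that ``all invariant divisors nef'' implies a product of projective spaces), the proposal as it stands does not close the essential step.
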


\begin{proof}
The condition $\Nef (X)=\PE(X)$ implies that 
every extremal ray of $\NE(X)$ is a Fano type. 

First, we assume that 
$X$ is smooth. 
We obtain a Fano contraction $f:X\to Y$ with 
$\rho(Y)=\rho (X)-1$, where 
$Y$ is a smooth projective 
toric variety and $\Nef (X)=\PE(X)$. 
It is well known that $X$ is a projective 
space bundle over $Y$. 
By the induction, we obtain $Y\simeq \mathbb P^{n_1}
\times \cdots \times\mathbb P^{n_{\rho-1}}$. Therefore, 
we can easily check that 
$X\simeq \mathbb P^{n_1}\times \cdots \times
\mathbb P^{n_\rho}$ and 
$f$ is the projection. 
Lemma \ref{key} may help the reader check it. 

Next, we just assume that $X$ is a $\mathbb Q$-factorial 
projective toric variety with $\Nef(X)=\PE(X)$. 
As above, we have a Fano contraction $f:X\to Y$ with 
$\rho (Y)=\rho (X)-1$. 
In this case, $Y$ is a $\mathbb Q$-factorial 
projective toric variety with $\Nef(Y)=\PE(Y)$. By applying 
the induction, we have a finite toric surjective 
morphism 
$g:W'=\mathbb P^{n_1}\times \cdots \times 
\mathbb P^{n_{\rho-1}}\to Y$. If we need, we 
take a higher model $W=\mathbb P^{n_1}\times \cdots
\times \mathbb P^{n_{\rho-1}}\to W'\to Y$ and can assume that 
$V\to W$ is a fiber bundle, where $V$ is the normalization 
of $W\times _YX$. 
We note that $\Nef(V)=\PE(V)$. 
For any irreducible 
torus invariant closed subvariety 
$U$ on $V$ such that 
$\dim U=\dim W+1$ and 
that $U\to W$ is surjective, we can see 
that $U$ is 
a $\mathbb P^1$-bundle over 
$W$ and $\Nef(U)=\PE(U)$. 
Therefore, 
$U\simeq W\times \mathbb P^1$ and $U\to W$ is 
the first projection by the previous step. 
By these observations, we can see that $V\simeq W\times F$, where $F$ is a $\mathbb Q$-factorial projective 
toric variety with $\rho (F)=1$. 
Thus, we obtain a desired finite toric morphism 
$\mathbb P^{n_1}\times \cdots \times \mathbb P^{n_\rho}\to 
X$. 
\end{proof}

The following property is a key lemma. 
 
\begin{lem}\label{key}
Let $X$ be a $\mathbb Q$-factorial 
projective 
toric variety with $\Nef (X)=\PE(X)$. 
Let $Z$ be any irreducible 
torus invariant closed subvariety of $X$. Then $Z$ is a 
$\mathbb Q$-factorial projective toric variety with 
$\Nef (Z)=\PE(Z)$. 
\end{lem}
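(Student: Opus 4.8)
The plan is to exploit the combinatorial description of torus invariant subvarieties and the fact that the condition $\Nef=\PE$ is equivalent to saying that every extremal ray of $\NE(X)$ is of Fano type, i.e. every primitive relation (every wall relation) has zero "positive part minus negative part" balance in the sense that the contraction it defines is a Fano contraction. Concretely, write $X=X(\Delta)$ with fan $\Delta$ in $N_{\mathbb R}$, and recall that an irreducible torus invariant closed subvariety $Z$ corresponds to a cone $\tau\in\Delta$: one has $Z=V(\tau)=X(\mathrm{Star}(\tau))$, a complete toric variety of dimension $n-\dim\tau$, whose fan lives in $N(\tau)_{\mathbb R}=(N/N_\tau)_{\mathbb R}$, with rays the images of the rays of cones in $\Delta$ containing $\tau$. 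Since $X$ is $\mathbb Q$-factorial, $\Delta$ is simplicial, hence $\mathrm{Star}(\tau)$ is simplicial and $Z$ is again $\mathbb Q$-factorial; projectivity of $Z$ follows from projectivity of $X$ (the ample cone of $X$ restricts into the ample cone of the closed subvariety $Z$, or one simply notes $V(\tau)$ is a closed subscheme of a projective variety). So the only real content is $\Nef(Z)=\PE(Z)$.

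For that, first I would reduce to the case $\dim\tau=1$, i.e. $Z=V(\rho)$ a torus invariant prime divisor, by an obvious induction: any $\tau$ of dimension $\geq 2$ contains a ray $\rho$, and $V(\tau)$ is a torus invariant closed subvariety of $V(\rho)$; if the divisor case is known, then $V(\rho)$ again has $\Nef=\PE$ and we may iterate. So it suffices to treat a torus invariant prime divisor $D=V(\rho_0)$. Here I would argue directly with curves and the pairing $\Pic\times N_1$: a torus invariant curve in $D$ corresponds to a wall (an $(n-2)$-dimensional cone) $w\in\mathrm{Star}(\rho_0)$, equivalently to a wall $w$ of $\Delta$ containing $\rho_0$. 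The key point is that $\NE(D)$ is generated by the classes of these invariant curves, and each such curve class $[C_w]$ is, up to positive scalar, the pushforward/image of the corresponding wall relation of $X$. Since $\Nef(X)=\PE(X)$ means every wall relation of $X$ is "effective on both sides" — more precisely each extremal contraction is of Fano type, so the associated curve is not contracted to lower-dimensional-image in a way that would produce a non-big nef class — one shows that on $D$ every torus invariant curve $C_w$ satisfies $(D'\cdot C_w)\geq 0$ for every effective divisor $D'$ on $D$, which is exactly $\Nef(D)=\PE(D)$.

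The cleaner route, which I would actually follow, avoids curve computations: use the structure theorem being proved in the Proposition just above, or rather the Fano contraction $f\colon X\to Y$ that $\Nef(X)=\PE(X)$ produces. Pick the extremal ray whose contraction $f\colon X\to Y$ is a Fano fiber contraction with $\rho(Y)=\rho(X)-1$; by induction on $\rho$ we may assume the lemma for $Y$ and for all lower Picard number. Given a torus invariant $Z=V(\tau)\subset X$, look at $f(Z)\subset Y$: it is a torus invariant closed subvariety of $Y$, so by induction it is $\mathbb Q$-factorial projective toric with $\Nef=\PE$, and $Z\to f(Z)$ is again a (possibly trivial) extremal/fiber-type situation. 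Unwinding the combinatorics of $\mathrm{Star}(\tau)$ against the fibration structure of $\Delta$ over the fan of $Y$, one sees $Z$ is built from $f(Z)$ by the same kind of Fano contraction, so $\Nef(Z)=\PE(Z)$ by the characterization "every extremal ray is Fano type", which is manifestly inherited. The base case $\rho(X)=1$ is trivial since then $Z$ also has $\rho(Z)=1$ (a complete $\mathbb Q$-factorial toric variety of Picard number one) and $\Nef=\PE$ automatically.

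The main obstacle is the bookkeeping in the inductive step: I must verify that the fan $\mathrm{Star}(\tau)$ of $Z$ genuinely fibers over the fan $\mathrm{Star}(f(\tau))$ of $f(Z)$ compatibly with the $\mathbb P^k$-bundle structure $\Delta\to\Delta_Y$, so that the Fano-type property of extremal rays passes from $Z$ to its image and back; equivalently, that restricting a wall relation of $X$ that lies "over" $Z$ gives a wall relation of $Z$ of the same (Fano) type. This is where I expect to need Reid's combinatorial description of toric extremal contractions carefully, and where a naive argument could miss that some wall of $Z$ might not come from a wall of $X$ containing $\tau$ — but in the simplicial case every wall of $\mathrm{Star}(\tau)$ is exactly the image of a wall of $\Delta$ containing $\tau$, which closes the gap. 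With that observation in hand the rest is routine.
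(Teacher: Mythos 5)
There is a genuine gap, and it sits exactly at the one point where the lemma has content. Your reduction of the statement ($Z=V(\tau)$, star fan, simpliciality gives $\mathbb Q$-factoriality, projectivity of a closed subvariety) is fine, but neither of your two routes actually proves $\Nef(Z)=\PE(Z)$. In the first (curve-theoretic) route you write ``one shows that on $D$ every torus invariant curve $C_w$ satisfies $(D'\cdot C_w)\geq 0$ for every effective divisor $D'$ on $D$'' --- but that inequality \emph{is} the assertion to be proved, and the preceding sentence about extremal contractions being of Fano type does not yield it: effectivity of a divisor on $Z$ has no a priori relation to effectivity or nefness of anything on $X$ unless you exhibit such a relation. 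In the second (``cleaner'') route, the claim that $Z$ ``is built from $f(Z)$ by the same kind of Fano contraction'' and that the property is ``manifestly inherited'' is unjustified and in this generality false as stated: for $\mathbb Q$-factorial (non-smooth) $X$ there is no $\mathbb P^k$-bundle structure $\Delta\to\Delta_Y$ to unwind (the Proposition only produces a finite cover by a product, and its proof itself invokes this lemma, so leaning on it risks circularity); moreover $Z\to f(Z)$ need not be elementary, need not be of fiber type, can be finite of degree $>1$, and $\rho(Z)$ is not controlled by $\rho(X)$, so the induction on $\rho(X)$ does not close.

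What is missing is a simple bridging fact that makes the lemma genuinely easy. Since $\Delta$ is simplicial, for any ray $\rho\not\subset\tau$ with $\tau+\rho\in\Delta$ every cone of $\Delta$ containing both $\rho$ and $\tau$ contains $\tau+\rho$ as a face; hence $D_\rho\cap V(\tau)=V(\tau+\rho)$ and, $D_\rho$ being $\mathbb Q$-Cartier by $\mathbb Q$-factoriality of $X$,
$$
D_\rho|_{V(\tau)}=c_\rho\,V(\tau+\rho)\quad\text{with } c_\rho\in\mathbb Q_{>0},
$$
while $D_\rho|_{V(\tau)}=0$ if $\tau+\rho\notin\Delta$. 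Every invariant prime divisor of $Z=V(\tau)$ is of the form $V(\tau+\rho)$, so it is a positive multiple of the restriction of the effective divisor $D_\rho$, which is nef on $X$ by the hypothesis $\Nef(X)=\PE(X)$; restrictions of nef classes are nef, so every invariant prime divisor of $Z$ is nef. Since $\PE(Z)$ is generated by the classes of the invariant prime divisors of $Z$, this gives $\PE(Z)\subseteq\Nef(Z)$, hence equality. (Equivalently, in your curve language: every invariant curve of $Z$ is an invariant curve of $X$, and $V(\tau+\rho)\cdot C=c_\rho^{-1}D_\rho\cdot C\geq 0$.) With this observation inserted, your first route becomes correct and needs neither the induction on $\dim\tau$ nor the Fano contraction; the paper itself dismisses the proof as obvious, and this restriction identity is precisely the sense in which it is.
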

\begin{proof}
It is obvious. 
\end{proof} 
 
\begin{ack}
We would like to thank Professor Noboru Nakayama for 
some comments. 
The first author 
was partially supported by The Inamori Foundation and by 
the Grant-in-Aid for Young Scientists (A) $\sharp$20684001 from 
JSPS. 
The second author was supported by the Grant-in-Aid for Young 
Scientists (B) $\sharp$20740026 from JSPS. 
\end{ack}
\ifx\undefined\bysame
\newcommand{\bysame|{leavemode\hbox to3em{\hrulefill}\,}
\fi

\end{document}